\newcommand{\syms}{\mathsf}
\newcommand{\h}{\ensuremath{\mathcal{H}}}
\newcommand{\hn}{\ensuremath{\mathcal{H}_n}}
\newcommand{\hnp}{\ensuremath{\mathcal{H}_{np}}}
\newcommand{\sym}{\ensuremath{\syms{Sym}}}
\newcommand{\fin}{\ensuremath{\syms{FSym}}}
\newcommand{\alt}{\ensuremath{\syms{FAlt}}}
\newcommand\sesn[5]{1\longrightarrow #1\stackrel#2\longrightarrow #3\stackrel#4\longrightarrow #5\longrightarrow 1}
\def\N{{\mathbb N}}           
\def\Z{{\mathbb Z}}
\def\H{{\mathcal H}}              
\DeclareMathOperator{\aut}{Aut}
\DeclareMathOperator{\com}{Com}
\DeclareMathOperator{\qi}{QI}
\newtheorem{thm}{Theorem}[section]    
\newtheorem{lemma}[thm]{Lemma}          
\newtheorem{prop}[thm]{Proposition}%
\title[Metric Properties of Houghton's Groups]{Commensurations and Metric Properties of Houghton's groups}
\author[Burillo]{Jos\'e Burillo}
\address{Departament de Matem\`atica Aplicada IV\\
EETAC -- UPC\\
Esteve Terrades 5\\
08860 Castelldefels (Barcelona)\\Spain}
\email{burillo@ma4.upc.edu}
\author[Cleary]{Sean Cleary}
\address{Department of Mathematics R8133\\
The City College of New York\\\newline
Convent Ave \& 138th\\
New York, NY 10031\\USA}
\email{scleary@ccny.cuny.edu}
\author[Martino]{Armando Martino}
\address{Mathematics\\
University of Southampton\\
Highfield\\
Southampton\\
SO17 1BJ\\ England}
\email{A.Martino@soton.ac.uk}
\author[R\"over]{Claas E. R\"over}
\address{Department of Mathematics\\
National University of Ireland, Galway\\
University Road\\
Galway\\
Ireland}
\email{claas.roever@nuigalway.ie}
\thanks{The authors would like to thank Charles Cox for his remarks in an earlier version of this paper. The authors are also grateful for the hospitality of NUI Galway.
  The first author acknowledges support from MEC grant
  \#MTM2011-25955.  The second author acknowledges support from
  the National Science Foundation and that this work was partially
  supported by a grant from the Simons Foundation (\#234548 to Sean
  Cleary).}
\date{\today}
\begin{document}

\begin{abstract}\noindent
We describe the automorphism groups and the abstract commensurators of
Houghton's groups. Then we give sharp estimates for the word metric of
these groups and deduce that the commensurators embed into the
corresponding quasi-isometry groups. As a further consequence, we
obtain that the Houghton group on two rays is at least quadratically
distorted in those with three or more rays.
\end{abstract}

\maketitle

\section*{Introduction}

The family of Houghton groups \hn\ was introduced by Houghton
\cite{houghton}.  These groups serve as an interesting family of
groups, studied by Brown \cite{brownfp}, who described their
homological finiteness properties, by R\"over \cite{claasv}, who
showed that these groups are all subgroups of Thompson's group $V$,
and by Lehnert \cite{jorgdead} who described the metric for $\H_2$.
Lee \cite{lee} described isoperimetric bounds, and de Cornulier, Guyot, and Pitsch \cite{isolated}
showed that they are isolated points in the space of groups.

Here, we classify automorphisms and determine the abstract
commensurator of \hn.  We also give sharp estimates for the word
metric which are sufficient to show that the map from the abstract
commensurator to the group of quasi-isometries of \hn\ is an
injection.

\section{Definitions and background}\label{sec_bg}

Let $\N$ be the set of natural numbers (positive integers) and $n\ge
1$ be an integer. We write $\Z_n$ for the integers modulo $n$ with
addition and put $R_n=\Z_n\times\N$. We interpret $R_n$ as the graph
of $n$ pairwise disjoint rays; each vertex $(i,k)$ is
connected to $(i,k+1)$. We denote by $\sym_n$, $\fin_n$ and $\alt_n$,
or simply \sym, \fin\ and \alt\ if $n$ is understood, the full
symmetric group, the finitary symmetric group and the finitary
alternating group on the set $R_n$, respectively.

The {\em Houghton group \hn} is the subgroup of \sym\ consisting of
those permutations that are eventually translations (of each of the
rays). In other words, the permutation $\sigma$ of the set $R_n$ is in
\hn\ if there exist integers $N\ge 0$ and $t_i=t_i(\sigma)$ for
$i\in\Z_n$ such that for all $k \geq N$, $(i,k)\sigma = (i, k+t_i)$;
throughout  we will use right actions.

Note that necessarily the sum of the translations $t_i$ must be zero
because the permutation needs of course to be a bijection. This implies
that $\H_1\cong \fin$.

For $i,j\in\Z_n$ with $i\neq j$ let $g_{ij}\in\hn$ be the element which
translates the line obtained by joining rays $i$ and $j$, given by
\begin{align*}
(i,n)g_{ij} &= (i,n-1) \text{ if }n > 1,\\
(i,1)g_{ij} &= (j,1),\\
(j,n)g_{ij} &= (j,n+1) \text{ if } n\ge 1\text{ and}\\
(k,n)g_{ij} &= (k,n)\text{ if } k\notin\{i,j\}.
\end{align*}

We also write $g_i$ instead of $g_{i\,i+1}$. It is easy to see that
$\{g_i\mid i\in \Z_n\}$, as well as $\{g_{ij}\mid i,j\in \Z_n,\,i\neq j\}$,
are generating sets for \hn\ if $n\ge 3$ as we can simply check that the
commutator $[g_0,g_1]=g_0^{-1}g_{1}^{-1}g_0g_{1}$ transposes $(1,1)$
and $(2,1)$. In the special case of $\H_2$, $g_1$ is redundant as $g_1=g_0^{-1}$. Further, an additional generator to $g_0$ is
required to generate the group; we choose $\tau$ which fixes all points except for
transposing $(0,1)$ and $(1,1)$.

It is now clear that the commutator subgroup of \hn\ is given by
$$
\hn'=\left\{\begin{array}{ll}
\alt,& \text{if } n\le 2\\
\fin,& \text{if } n\ge 3
\end{array}\right.
$$
For $n\ge 3$, we thus have a short exact sequence
\begin{equation}\label{ses}
\sesn{\fin}{\null}{\hn}{\pi}{\Z^{n-1}}
\end{equation}
where $\pi(\sigma)=(t_0(\sigma),\ldots,t_{n-2}(\sigma))$ is
the abelianization homomorphism. We note that as the sum of all
the eventual translations must be zero, we have the last translation
is determined by the preceding ones:

\begin{equation}\label{eq_zero_sum}
t_{n-1}(\sigma)=-\sum_{i=0}^{n-2}t_i(\sigma).
\end{equation}

We will use the following facts freely throughout this paper, see
Dixon and Mortimer \cite{dixon} or Scott \cite{scott}.

\begin{lemma}\label{lem_free_facts}
The group \alt\ is simple and equal to the commutator subgroup of
\fin, and $\aut(\alt)=\aut(\fin)=\sym$.
\end{lemma}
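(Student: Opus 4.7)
The plan is to verify, in order, the three independent assertions of the lemma; all are standard and can be found in Dixon--Mortimer \cite{dixon} or Scott \cite{scott}.

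For the simplicity of $\alt$, I would take a nontrivial normal subgroup $N\subseteq\alt$ and pick any non-identity $\sigma\in N$. Its support is finite by definition, so I enclose it in a finite set $S\subseteq R_n$ with $|S|\ge 5$. The intersection $N\cap\mathrm{Alt}(S)$ is then a nontrivial normal subgroup of the finite simple group $\mathrm{Alt}(S)$ and so equals it. Since any two finite subsets of $R_n$ of the same (sufficiently large) size can be moved onto each other by a suitable element of $\alt$, conjugation propagates this to show $N\supseteq\mathrm{Alt}(T)$ for every finite $T\subseteq R_n$ with $|T|\ge 5$, whence $N=\alt$. The equality $\alt=\fin'$ is then immediate: each $3$-cycle $(a\,b\,c)$ equals the commutator $[(a\,b),(a\,c)]$, so the $3$-cycles --- which generate $\alt$ --- lie in $\fin'$, and conversely every commutator in $\fin$ is finitary and has trivial sign, hence lies in $\alt$.

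For the automorphism statement, conjugation yields homomorphisms $\sym\to\aut(\fin)$ and $\sym\to\aut(\alt)$. These are injective: if a non-identity $\alpha\in\sym$ sends some $x$ to a distinct $y$, then choosing $z\in R_n\setminus\{x,\alpha^{-1}(x),y\}$ (and a further $w$ avoiding the analogous points in the $\alt$ case) shows $\alpha$ fails to commute with the transposition $(x\,z)$, respectively the $3$-cycle $(x\,z\,w)$. For surjectivity, the key is to characterise the set of transpositions in $\fin$, or of $3$-cycles in $\alt$, in purely group-theoretic terms --- for instance by their order together with a condition on the isomorphism type of their centraliser --- so that any automorphism must permute that distinguished conjugacy class. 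From the combinatorics of supports and intersections of these elements one then extracts an induced permutation of $R_n$ itself, giving the required element of $\sym$ that implements the automorphism by conjugation.

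The main obstacle is precisely this last surjectivity step: producing the intrinsic group-theoretic description of transpositions or $3$-cycles, and then verifying that the induced map on points is a well-defined bijection of $R_n$ that really does realise the given automorphism. Everything else reduces in a routine manner to the corresponding classical facts for finite alternating and symmetric groups.
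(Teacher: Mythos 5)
The paper does not actually prove this lemma: it is stated as a package of classical facts with a pointer to Dixon--Mortimer \cite{dixon} and Scott \cite{scott}, so there is no in-paper argument to compare yours against. Judged on its own, what you actually carry out is correct and is the standard argument from those references. The simplicity proof --- intersect a nontrivial normal subgroup $N$ with $\mathrm{Alt}(S)$ for a finite $S$ containing the support of some non-identity element, invoke simplicity of the finite alternating group, and spread by conjugation --- works (with the minor remark that to realise a bijection $S\to T$ by an element of \alt\ one may need a parity adjustment outside $S\cup T$, or one can simply conjugate a single $3$-cycle around using transitivity). The identity $\alt=\fin'$ via $(a\,b\,c)=[(a\,b),(a\,c)]$ together with the sign homomorphism is fine, as is the injectivity of the conjugation maps $\sym\to\aut(\fin)$ and $\sym\to\aut(\alt)$.

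The one substantive incompleteness --- which you flag yourself --- is the surjectivity of those conjugation maps, and that is the real content of $\aut(\alt)=\aut(\fin)=\sym$. Your plan (characterise transpositions, respectively $3$-cycles, intrinsically, e.g.\ by their order together with the structure of their centralisers, conclude that every automorphism permutes that class, and then recover an induced permutation of $R_n$ from the incidence combinatorics of supports) is exactly the classical route, and it does go through for infinite $\Omega$ (there is no analogue here of the exceptional behaviour of $S_6$), but as written it is a statement of intent rather than a proof: the centraliser characterisation and the verification that the induced map on points is a well-defined bijection implementing the given automorphism are precisely where all the work lies. Since the paper itself defers the whole lemma to the literature, your write-up is acceptable in the same spirit provided you either execute that step or cite it precisely (it is Theorem~8.2A and the surrounding discussion in \S 8.2 of \cite{dixon}); on a self-contained reading the surjectivity step remains a genuine gap.
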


\section{Automorphisms of \hn}
Here we determine the automorphism group of \hn. First we establish
that we have to look no further than \sym. We let $N_G(H)$ denote the
normalizer, in $G$, of the subgroup $H$ of $G$.

\begin{prop} \label{prop_injective}
Every automorphism of \hn, $n\ge 1$, is given by conjugation by an
element of \sym, that is to say $\aut(\hn)=N_{\sym}(\hn)$.
\end{prop}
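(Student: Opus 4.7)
The plan is to exploit a characteristic subgroup of $\hn$ on which the automorphism group is already pinned down by Lemma~\ref{lem_free_facts}. Set $K = \hn'$, which equals \fin\ for $n \geq 3$ and \alt\ for $n \leq 2$; either way $K$ is characteristic in \hn, and Lemma~\ref{lem_free_facts} gives $\aut(K) = \sym$. Consequently, for any $\phi \in \aut(\hn)$, the restriction $\phi|_K$ is conjugation by some $\sigma \in \sym$. The remaining task is to show that conjugation by this same $\sigma$ implements $\phi$ on all of \hn, and in particular that $\sigma$ normalises \hn.

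The main step is a standard transport argument. Given $g \in \hn$ and $h \in K$, I would compute $\phi(ghg^{-1})$ in two ways: by normality of $K$ in \hn\ one has $ghg^{-1} \in K$ and hence $\phi(ghg^{-1}) = \sigma^{-1} g h g^{-1} \sigma$, whereas expanding $\phi$ as a homomorphism gives $\phi(g)\, \sigma^{-1} h \sigma\, \phi(g)^{-1}$. Equating the two expressions forces the element $\sigma \phi(g) \sigma^{-1} g^{-1} \in \sym$ to centralise every $h \in K$.

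To conclude, I would invoke that the centraliser of $K$ in \sym\ is trivial: for $K = \fin$ this follows from the elementary observation that any non-identity permutation fails to commute with some transposition, and for $K = \alt$ the analogous observation with $3$-cycles works. This yields $\phi(g) = \sigma^{-1} g \sigma$ for every $g \in \hn$, so simultaneously $\sigma \in N_{\sym}(\hn)$ and $\phi$ equals conjugation by $\sigma$, establishing $\aut(\hn) \subseteq N_{\sym}(\hn)$; the reverse inclusion is immediate since any element of $N_{\sym}(\hn)$ manifestly induces an automorphism of \hn\ by conjugation. The only genuine obstacle is correctly identifying $K$ so that it is both characteristic in \hn\ and has trivial centraliser in \sym; once $K$ is in place the rest is formal.
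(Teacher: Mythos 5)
Your argument is correct, and it shares the paper's skeleton --- pass to a characteristic subgroup $K$ of \hn\ with $\aut(K)=\sym$ (the paper uses the second derived subgroup \alt\ in all cases; your choice of $\hn'$ works equally well since Lemma~\ref{lem_free_facts} covers both \alt\ and \fin) --- but the two proofs diverge in how they promote the conjugator $\sigma$ from $K$ to all of \hn. The paper composes $\phi$ with the inner automorphism of \sym\ given by $\sigma^{-1}$ and then shows by an explicit computation, tracking six consecutive points $a_0,\dots,a_5$ of a ray under conjugation of $3$-cycles by $g_i$, that the resulting map fixes each generator $g_i$; this is concrete but is tailored to the generating set (and for $n\le 2$ one would still need to treat the extra generator $\tau$, respectively note that $\H_1=\fin$ is already handled by Lemma~\ref{lem_free_facts}). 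You instead run the standard transport argument: for $g\in\hn$ and $h\in K$, computing $\phi(ghg^{-1})$ two ways shows that $g^{-1}\sigma\phi(g)\sigma^{-1}$ (up to taking an inverse or conjugate, depending on conventions) centralizes $K$, and the centralizer of \alt\ (or \fin) in \sym\ is trivial because these groups are highly transitive on the infinite set $R_n$. This yields $\phi(g)=\sigma^{-1}g\sigma$ uniformly for all $n\ge 1$ without any case analysis on generators, and the same triviality of the centralizer gives the injectivity of $N_{\sym}(\hn)\to\aut(\hn)$ needed for the stated equality. The trade-off is that the paper's hands-on computation is reused later (in Theorem~\ref{thm_com}, applied to $g_i^{\,p}$ and the points $(i,k+p\ell)$), whereas your abstract argument would need the analogous adaptation there; as a proof of the proposition itself, yours is cleaner and complete.
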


\begin{proof}
From the above, the finitary alternating group \alt\ is the
second derived subgroup of \hn, and hence characteristic in \hn.  So
every automorphism of \hn\ restricts to an automorphism of \alt. Since
$\aut(\alt)=\sym$, this restriction yields a homomorphism
$\aut(\hn)\longrightarrow\sym$ and we need to show that it is
injective with image equal to $N_{\sym}(\hn)$.

In order to see this let $\psi\in\aut(\hn)$ be an automorphism. Compose this with an
inner automorphism (of $\sym$) so that the result is an (injective) homomorphism $\alpha: \hn \to \sym$
whose restriction to \alt\ is trivial. We let $k\in\N$ and consider the
following six consecutive points $a_\ell=(i,k+\ell)$ of $R_n$ for
$\ell\in\{0,1,\ldots ,5\}$.

We denote by $g_i^\alpha$ the image of $g_i$ under $\alpha$, and by
$(x\,y\,z)$ the $3$-cycle of the points $x$, $y$ and $z$. Using the
identities
$$
g_i^{-1} (a_1\,a_2\,a_3)g_i = (a_0\,a_1\,a_2) \text{ and }
g_i^{-1} (a_3\,a_4\,a_5)g_i = (a_2\,a_3\,a_4)
$$
and applying $\alpha$, which is trivial on \alt, we get
$$
(g_i^\alpha)^{-1} (a_1\,a_2\,a_3) g_i^\alpha = (a_0\,a_1\,a_2)
\text{ and }
(g_i^\alpha)^{-1} (a_3\,a_4\,a_5) g_i^\alpha = (a_2\,a_3\,a_4).
$$
Hence, we must have that $g_i^\alpha$ maps $\{a_1,a_2,a_3\}$ to $\{a_0,a_1,a_2\}$, and then also $\{a_3,a_4,a_5\}$ to $\{a_2,a_3,a_4\}$. The conclusion  is that it maps $a_3$ to $a_2$. Applying a similar
argument to all points in the branches $i$ and $i+1$, it follows that
$g_i^\alpha=g_i$, and since $i$ was arbitrary, this
means that $\alpha$ is the identity map.
\end{proof}

With Lemma~\ref{lem_free_facts} in mind we now present the complete
description of $\aut(\hn)$.

\begin{thm}\label{thm_aut}
For $n\ge 2$, the automorphism group $\aut(\hn)$ of the Houghton group
\hn\ is isomorphic to the semidirect product
$\hn\rtimes\mathcal{S}_n$, where $\mathcal{S}_n$ is the symmetric
group that permutes the $n$ rays.
\end{thm}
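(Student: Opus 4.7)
The plan is to invoke Proposition~\ref{prop_injective}, which identifies $\aut(\hn)$ with $N_{\sym}(\hn)$, and then show $N_{\sym}(\hn)=\hn\cdot\mathcal{S}_n$ with $\hn\cap\mathcal{S}_n=\{1\}$. Embed $\mathcal{S}_n\le\sym$ by the rigid ray-permutation $(i,k)\rho=(i\rho,k)$. Each such $\rho$ normalises $\hn$, because conjugating an eventually-translating permutation by $\rho$ merely relabels which ray is translated by which amount, and $\hn\cap\mathcal{S}_n=\{1\}$ since a nontrivial $\rho$ sends ray $i$ bodily onto a different ray and therefore cannot be an eventual translation of ray $i$. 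These remarks deliver an injection $\hn\rtimes\mathcal{S}_n\hookrightarrow N_{\sym}(\hn)$ with $\mathcal{S}_n$ acting on $\hn$ by conjugation, so the substantive content of the theorem is surjectivity.

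For surjectivity, take $\sigma\in N_{\sym}(\hn)$. The element $g_{ij}$ has exactly one infinite orbit, namely the union of rays $i$ and $j$, with all other points fixed. Its conjugate $\sigma^{-1}g_{ij}\sigma$ lies in $\hn$ and has the same cycle structure, so its unique infinite orbit is $\sigma^{-1}(\mathrm{ray}_i\cup\mathrm{ray}_j)$. A short inspection of the translation vector shows that any element of $\hn$ with exactly one infinite orbit must have translation vector $\pm(e_k-e_l)$ and an infinite orbit cofinite in $\mathrm{ray}_k\cup\mathrm{ray}_l$; hence $\sigma^{-1}(\mathrm{ray}_i\cup\mathrm{ray}_j)$ equals some $\mathrm{ray}_k\cup\mathrm{ray}_l$ up to a finite set. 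For $n\ge 3$ this pair-to-pair assignment refines to a permutation $\rho\in\mathcal{S}_n$ with $\sigma^{-1}(\mathrm{ray}_i)$ equal to $\mathrm{ray}_{i\rho^{-1}}$ up to a finite set, obtained by intersecting pair-assignments for overlapping pairs (using $(\mathrm{ray}_i\cup\mathrm{ray}_j)\cap(\mathrm{ray}_i\cup\mathrm{ray}_m)=\mathrm{ray}_i$ to isolate a single common ray). For $n=2$ the conclusion follows directly: the translation vector of $\sigma^{-1}g_0\sigma$ is either $(-1,+1)$, in which case $\sigma$ preserves each ray up to a finite set, or $(+1,-1)$, in which case $\sigma$ swaps the two rays up to a finite set.

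Replacing $\sigma$ by $\sigma':=\sigma\rho^{-1}\in N_{\sym}(\hn)$, we may assume $\sigma'$ stabilises every ray up to a finite set, and must show $\sigma'\in\hn$. For each $i$, the conjugate $\sigma'^{-1}g_i\sigma'\in\hn$ is eventually a translation by some integer $t$ on ray $i$; translating this identity for the restriction $f$ of $\sigma'$ to ray $i$ yields $f(k+t)-f(k)=-1$ for large $k$. Combining this with the fact that $f$ is a bijection between cofinite subsets of $\N$ rules out $|t|\ne 1$ (non-injectivity) and $t=+1$ (values descend below $0$), leaving $t=-1$ and forcing $f$ to be an eventual translation. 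Hence $\sigma'$ is eventually a translation along every ray, $\sigma'\in\hn$, and $\sigma\in\hn\cdot\mathcal{S}_n$. The main obstacle is the cycle-structure step: classifying the single-infinite-orbit elements of $\hn$ precisely enough to recover an unordered ray-pair from $\sigma^{-1}g_{ij}\sigma$, and then arguing that the pair-level data coheres into a well-defined ray permutation $\rho$.
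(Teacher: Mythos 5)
Your argument is correct in outline and shares the paper's skeleton: reduce to $N_{\sym}(\hn)$ via Proposition~\ref{prop_injective}, exhibit the rigid ray permutations as a complement meeting \hn\ trivially, and identify the kernel of the induced map to $\mathcal{S}_n$ with \hn. The two main steps are executed differently, though. To produce the ray permutation, the paper conjugates an element acting as the successor map on a given ray, so that membership of the conjugate in \hn\ forces consecutive image points onto a common ray; this works uniformly for all $n\ge 2$. You instead classify the single-infinite-orbit elements of \hn\ (translation vector $\pm(e_k-e_l)$, orbit cofinite in the union of two rays) and intersect the resulting pair assignments, which requires $n\ge 3$; your $n=2$ case is an assertion rather than an argument --- the claim that the sign of the translation vector of $g_0^\sigma$ determines whether $\sigma$ preserves or swaps the rays is true, but it needs the successor/orbit-dynamics argument to justify, and as written this is the one real soft spot (the pair-to-permutation refinement for $n\ge3$ also silently uses a small pigeonhole step to rule out two rays landing in the same pair). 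For the kernel step, your functional-equation argument (the restriction $f$ of $\sigma'$ to a ray satisfies $f(k+1)=f(k)-t$ eventually, and bijectivity of $f$ between cofinite subsets of $\N$ forces $t=-1$ and $f$ an eventual translation) is a welcome explicit substitute for the paper's ``it is quite clear now that $\alpha$ translates by an integer''; the paper instead pins down only the sign of $t_i(\sigma^\alpha)$ by following forward orbits. Net effect: your route buys a cleaner, more complete kernel argument at the price of a heavier, and for $n=2$ incomplete, route to the ray permutation.
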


\begin{proof}
By the proposition, it suffices to prove that every
$\alpha\in\sym$ which normalizes \hn\ must map $(i,k+m)$ to $(j,l+m)$
for some $k,l\ge 1$ and all $m\ge 0$.

To this end, we pick $\alpha\in N_\sym(\hn)$ and $\sigma\in\hn$.  Since
$\sigma^\alpha$($=\alpha^{-1}\sigma\alpha)$ is in \hn\ and maps the
point $x\alpha$ to $(x\sigma)\alpha$, these two points must lie on the
same ray for all but finitely many $x\in R_n$. Similarly, $x$ and
$x\sigma$ lie on the same ray for all but finitely many $x\in R_n$, as
$\sigma\in\hn$. In fact, given a ray, we can choose $\sigma$ so that whenever $x$ lies on that ray, then $x$ and $x \sigma$ are successors on the same ray. Combining these facts, we see that $\alpha$ maps
all but finitely many points of ray $i$ to one and the same ray, say
ray $j$. This defines a homomorphism from $\aut(\hn)$ onto $\mathcal{S}_n$, which is obviously split, since given a permutation of the $n$ rays, it clearly defines an automorphism of $\hn$.


So assume that $\alpha$ is now in the kernel of that map, so it does not permute the rays, and take $\sigma$ a $g_{ji}$ generator of $\hn$, i.e., an infinite cycle inside $\sym$. Then, since conjugating inside $\sym$ preserves cycle types, the element $\sigma^\alpha\in\hn$ is also a single infinite cycle. This means that $\sigma^\alpha$ has nonzero translations in only two rays, and these translations are $1$ and $-1$. For all points in the support of $\sigma$, we have that $\sigma^k$ sends this point into the $i$-th ray for all sufficiently large $k$. Therefore, as $\alpha$ sends almost all points in the $i$-th ray into the $i$-th ray, the same is true for $\alpha^\sigma$. Hence $t_i(\sigma^\alpha)$ is positive, so it must be $t_i(\sigma^\alpha)=1$.
It is quite clear now that $\alpha$ translates by an integer in the ray $i$, sufficiently far out. This finishes the proof since this could be done for any $i$, and hence $\alpha\in\hn$.
\end{proof}

\section{Commensurations of \hn}

First, we recall that a commensuration of a group $G$ is an isomorphism
$A\stackrel\phi\longrightarrow B$, where $A$ and $B$ are subgroups of
finite index in $G$. Two commensurations $\phi$ and $\psi$ of $G$ are
equivalent if there exists a subgroup $A$ of finite index in $G$,
such that the restrictions of $\phi$ and $\psi$ to $A$ are equal.  The
set of all commensurations of $G$ modulo this equivalence relation
forms a group, known as the (abstract) \emph{commensurator of $G$},
and is denoted $\com(G)$. In this section we will determine the
commensurator of \hn.

For a moment, we let $H$ be a subgroup of a group $G$. The \emph{relative
  commensurator of $H$ in $G$} is denoted $\com_G(H)$ and consists of
those $g\in G$ such that $H\cap H^g$ has finite index in both $H$ and
$H^g$.  Similar to the homomorphism from $N_G(H)$ to $\aut(H)$, there
is a homomorphism from $\com_G(H)$ to $\com(H)$; Its kernel consists
of those elements of $G$ that centralize a finite index subgroup of
$H$.

In order to pin down $\com(\hn)$, we first establish that every
commensuration of \hn\ can be seen as conjugation by an element of
\sym, and then characterize  $\com_\sym(\hn)$.

Since a commensuration $\phi$ and its restriction to a subgroup of
finite index in its domain are equivalent, we can restrict our
attention to the following family of subgroups of finite index in \hn,
in order to exhibit $\com(\hn)$. For every integer $p\ge 1$, we define
the subgroup $U_p$ of \hn\ by
\[ U_p=\langle \alt, g_i^p\mid i\in\Z_n\rangle.\]

We collect some useful properties of these subgroup first, where \\
$A\subset_f B$ means that $A$ is a subgroup of finite index in $B$.

\begin{lemma}\label{lem_finite_index}
\begin{enumerate} Let $n\ge 3$.
\item[(i)] For every $p$, the group $U_p$ coincides with $\hn^p$, the
  subgroup generated by all $p^\mathrm{th}$ powers in \hn, and hence
  is characteristic in \hn.\\
\item[(ii)] $\displaystyle U_p'=\left\{\begin{array}{cl}\alt, & p \text{
  even}\\\fin, & p \text{ odd}\end{array}\right.$ and
 $\displaystyle \lvert\hn\colon U_p\rvert =\left\{\begin{array}{cl} 2p^{n-1}, & p \text{
  even}\\ p^{n-1}, & p \text{ odd}\end{array}\right.$.\\
\item[(iii)] For every finite index subgroup $U$ of \hn, there exists a $p\ge
  1$ with $\alt = U_p' \subset U_p \subset_f U \subset_f \hn$.
\end{enumerate}
\end{lemma}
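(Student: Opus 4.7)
The approach is to establish the three parts in order, leveraging the simplicity of $\alt$ and the short exact sequence \eqref{ses}.

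For (i), the containment $U_p\subseteq\hn^p$ is immediate: $g_i^p\in\hn^p$ trivially, while $\alt\subseteq\hn^p$ because $\hn^p\cap\alt$ is a normal subgroup of the simple group $\alt$ and any $\sigma\in\alt$ whose order $q$ is coprime to $p$ satisfies $\sigma=(\sigma^k)^p$ with $pk\equiv 1\pmod q$. For the reverse $\hn^p\subseteq U_p$, exploit that $\pi(g_0),\ldots,\pi(g_{n-2})$ form a basis of $\Z^{n-1}$, so every $\sigma\in\hn$ factors as $\sigma=f\cdot g_0^{a_0}\cdots g_{n-2}^{a_{n-2}}$ with $f\in\fin$. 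Expanding $\sigma^p$ and collecting all finitary contributions on the left yields $\sigma^p=F\cdot g_0^{pa_0}\cdots g_{n-2}^{pa_{n-2}}$, where $F\in\fin$ consists of $p$ conjugates of $f$ together with $\fin$-commutators produced in reordering. A sign bookkeeping based on the parity of $p$ places $F$ in $U_p\cap\fin$, so $\sigma^p\in U_p$. Being the verbal subgroup of $p$-th powers, $U_p=\hn^p$ is automatically characteristic.

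For (ii), $\alt\subseteq U_p'$ holds because $\alt$ is perfect. For $p$ odd, every transposition is its own $p$-th power and so lies in $\hn^p=U_p$, hence $\fin\subseteq U_p$. A direct pointwise computation, mirroring the paper's identification of $[g_0,g_1]$ as the single transposition swapping $(1,1)$ and $(2,1)$, shows that $[g_0^p,g_1^p]$ is the product of $p$ disjoint transpositions $((1,k)\,(2,p+1-k))$ for $k=1,\ldots,p$. For $p$ odd this is odd, forcing $U_p'=\fin$; for $p$ even it is even, so $U_p\cap\fin=\alt$ and $U_p'=\alt$. The index then follows from \eqref{ses}: writing $|\hn:U_p|=|\Z^{n-1}:\pi(U_p)|\cdot|\fin:U_p\cap\fin|$, the image $\pi(U_p)=p\Z^{n-1}$ contributes $p^{n-1}$, and the intersection $U_p\cap\fin$ contributes $1$ for $p$ odd and $2$ for $p$ even.

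For (iii), let $U$ be a finite-index subgroup of $\hn$. Since $\alt$ is infinite and simple, the finite-index subgroup $U\cap\alt$ of $\alt$ must coincide with $\alt$, so $\alt\subseteq U$. Set $m=|\Z^{n-1}:\pi(U)|$ and $p=2m$; then $p\Z^{n-1}\subseteq\pi(U)$, so for each $i$ there exists $u_i\in U$ with $\pi(u_i)=p\pi(g_i)$, giving $g_i^p=u_if_i$ with $f_i\in\fin$. Because $p$ is even, the same parity argument as in (i) places $f_i\in\alt\subseteq U$, so $g_i^p\in U$ for every $i$. Hence $U_p\subseteq U$, and $U_p'=\alt$ by (ii).

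The main obstacle is the pointwise bookkeeping underlying (i) and (ii): identifying $[g_0^p,g_1^p]$ as the specific product of $p$ transpositions and tracking the sign of the finitary correction $F$ that appears when expanding $\sigma^p$. Once these local parity computations are in hand, assembling the conclusions via \eqref{ses} and the simplicity of $\alt$ is routine.
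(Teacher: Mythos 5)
There is a genuine gap, and it sits at the step in your part (ii) where you pass from ``$[g_0^p,g_1^p]$ is even for $p$ even'' to ``$U_p\cap\fin=\alt$''. That inference is a non sequitur: $U_p\cap\fin$ consists of \emph{all} products of the generators $g_i^{\pm p}$ and elements of $\alt$ whose image in $\Z^{n-1}$ vanishes, not merely of commutators of the generators. Concretely, for $n=3$ and $p=2$ a direct check shows $g_0^2g_1^2g_2^2=((0,1)\,(0,2))$, a single transposition, so $U_2\supseteq\fin$ and $\lvert\H_3:U_2\rvert=4$ rather than $8$. (Working in $\hn/\alt$, which is a class-two central extension of $\Z^{n-1}$ by $\fin/\alt$, and collecting, one finds that $g_0^p\cdots g_{n-1}^p$ is an odd permutation exactly when $n\binom{p}{2}$ is odd, i.e.\ when $n$ is odd and $p\equiv 2\pmod 4$.) The same issue undermines the ``sign bookkeeping'' you invoke but never carry out in part (i): collecting $(g_0g_1)^p$ gives $g_0^pg_1^p$ times a finitary element congruent to $[g_1,g_0]^{\binom{p}{2}}$ modulo $\alt$, which is odd whenever $p\equiv 2\pmod 4$; so $\hn^p$ then contains an odd permutation, and it cannot coincide with a subgroup satisfying $U_p\cap\fin=\alt$. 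In other words, for every $p\equiv 2\pmod 4$ your arguments for (i) and for the index formula in (ii) are mutually inconsistent, and the computation needed to resolve the tension shows the claimed index $2p^{n-1}$ is not always correct.

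In fairness, the paper's own proof is no more careful at exactly this point: it asserts that the index ``follows immediately'' from the short exact sequence and leaves (i) as ``an exercise'', so the difficulty is as much with the statement as with your write-up. The parts of your proposal that do work match the paper: the parity of $[g_i^p,g_j^p]$ (which both you and the paper use to identify $U_p'$, you by an explicit cycle computation, the paper via the commutator identities), and part (iii), where simplicity of $\alt$ forces $\alt\subseteq U$ and one then chooses an even $p$ with $p\Z^{n-1}\subseteq\pi(U)$. But as written, the passage from the parity of a single commutator to the determination of $U_p\cap\fin$ --- and hence to the index formula and to $U_p=\hn^p$ --- is a real gap, not a routine bookkeeping exercise.
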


The same is essentially true for the case $n=2$, except that $U_p'$ is always equal to $\alt$ in  this case, with the appropriate change in the index.

\begin{proof}
First we establish (ii). We know that $[g_i,g_j]$ is either trivial,
when $j\notin\{i-1,i+1\}$, or an odd permutation.  So the commutator
identities $[ab,c]=[a,c]^b[b,c]$ and $[a,bc]=[a,c][a,b]^c$ imply the
first part, and the second part follows immediately using the short exact sequence (\ref{ses})
from Section~\ref{sec_bg}.

Part (i) is now an exercise, using that $\alt^p=\alt$.

In order to show (iii), let $U$ be a subgroup of finite index in
\hn. The facts that \alt\ is simple and $U$ contains a normal finite
index subgroup of \hn, imply that $\alt\subset U$. Let $p$ be the
smallest even integer such that $(p\Z)^{n-1}$ is contained in the
image of $U$ in the abelianisation of \hn. It is now clear that $U_p$
is contained in $U$.
\end{proof}

Noting that $\com(\H_1) = \aut(\H_1) = \sym$, we now
characterize the commensurators of Houghton's groups.

\begin{thm}\label{thm_com}
Let $n\ge 2$. Every commensuration of \hn\ normalizes $U_p$ for some
even integer $p$.  The group $N_p=N_\sym(U_p)$ is isomorphic to
$\mathcal{H}_{np}\rtimes (\mathcal{S}_p\wr\mathcal{S}_n)$, and $\com(\hn)$ is the
direct limit of $N_p$ with even $p$ under the natural embeddings
$N_p\longrightarrow N_{pq}$ for $q \in\N$.
\end{thm}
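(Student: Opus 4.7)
The plan is to establish the three assertions in turn: every commensuration of $\hn$ is induced by conjugation by an element $\alpha\in\sym$ normalizing some $U_r$ with $r$ even; the structure $N_p\cong\mathcal{H}_{np}\rtimes(\mathcal{S}_p\wr\mathcal{S}_n)$; and the direct limit description of $\com(\hn)$.

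For the first assertion, I would start from any commensuration $\phi$ and use Lemma~\ref{lem_finite_index}(iii) to reduce to an isomorphism $\phi\colon U_p\to\phi(U_p)$ with $p$ even and $\phi(U_p)\supseteq U_q$ for some even $q$. Since $(U_p)'=\alt$ by Lemma~\ref{lem_finite_index}(ii), the derived subgroup $\phi(U_p)'$ contains $\alt$, lies in $\fin=\hn'$, and is isomorphic to $\alt$; as $[\fin:\alt]=2$ and $\fin\not\cong\alt$, this forces $\phi(\alt)=\alt$. Then $\phi|_\alt$ is conjugation by a unique $\alpha\in\sym$ by Lemma~\ref{lem_free_facts}. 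Twisting $\phi$ by $\alpha^{-1}$ and using that $\alt$ is normal in $U_p$ with trivial centralizer in $\sym$ (a variant of the argument in Proposition~\ref{prop_injective}, applied to the generators $g_i^p$) shows the twisted $\phi$ to be the identity on $U_p$, so the original $\phi$ agrees with conjugation by $\alpha$ on $U_p$. To produce an even $r$ with $\alpha^{-1}U_r\alpha=U_r$, I would apply the same analysis to $\phi^{-1}$, obtaining $\alpha U_{p'}\alpha^{-1}\subseteq\hn$ for some even $p'$, intersect $U_p$ with its $\alpha^{\pm1}$-conjugates to extract a common $U_s$ via Lemma~\ref{lem_finite_index}(iii), and finally invoke the structural description in the second assertion to upgrade the one-sided containments $\alpha^{\pm1}U_s\alpha^{\mp1}\subseteq U_p$ into a genuine normalization identity. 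This last step I expect to be the main obstacle.

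For the second assertion, I would decompose $R_n$ into $np$ sub-rays $s_{i,r}=\{(i,r+1+kp):k\ge0\}$ indexed by $(i,r)\in\Z_n\times\Z_p$. This identifies $R_n$ with $R_{np}$ as a rayed set, and under this identification $U_p$ corresponds to the subgroup of eventual-translation permutations whose translation is constant across the $p$ sub-rays within each original ray of $R_n$. An analogue of the argument in Theorem~\ref{thm_aut} shows that any $\alpha\in N_\sym(U_p)$ must eventually send each sub-ray into a single sub-ray, and the constant-translation constraint forces the resulting permutation of sub-rays to preserve the partition of $\Z_n\times\Z_p$ into $n$ blocks of $p$ sub-rays each, giving a split surjection $N_p\to\mathcal{S}_p\wr\mathcal{S}_n$. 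Its kernel, consisting of sub-ray-preserving permutations that act by an eventual translation on each sub-ray, is precisely $\mathcal{H}_{np}$.

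For the third assertion, a brief abelianization computation shows that $U_{pq}=(U_p)^q$, the subgroup generated by $q$-th powers of $U_p$-elements, so $U_{pq}$ is characteristic in $U_p$ and every $\alpha\in N_p$ automatically lies in $N_{pq}$, producing the natural embeddings $N_p\hookrightarrow N_{pq}$ as inclusions of subgroups of $\sym$. By the first assertion, each element of $\com(\hn)$ is represented by some $\alpha$ in some $N_r$; two such representatives $\alpha_1,\alpha_2$ give equivalent commensurations iff they induce the same conjugation on some $U_s$, and since $\alt\subseteq U_s$ has trivial centralizer in $\sym$ this forces $\alpha_1=\alpha_2$ as elements of $\sym$, hence as elements of any $N_r$ containing both. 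This identifies $\com(\hn)$ with the direct limit of the $N_p$ over even $p$.
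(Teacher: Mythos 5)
Your reduction to conjugation by an element $\alpha\in\sym$ (via the characteristic nature of $\alt$ inside finite-index subgroups and the six-point argument applied to $g_i^{\,p}$) and your treatment of the second and third assertions all track the paper closely: the paper likewise splits each ray into $p$ sub-rays to identify $U_p$ with the constant-translation subgroup of $\hnp$ and reads off $N_p\cong\hnp\rtimes(\mathcal{S}_p\wr\mathcal{S}_n)$, and the direct limit statement follows essentially as you describe. The problem is precisely the step you yourself flag as ``the main obstacle'': you never prove that $\alpha$ normalizes some $U_r$, and the fix you propose --- invoking the structure of $N_\sym(U_p)$ from the second assertion --- is circular, since that structure theorem only describes elements already known to lie in $N_\sym(U_p)$. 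One-sided containments of the form $\alpha^{\pm1}U_s\alpha^{\mp1}\subseteq U_p$ do not place $\alpha$ in any normalizer, and no formal group-theoretic manipulation upgrades them.

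The paper closes this gap with a dynamical, cycle-type argument that is absent from your outline. Take $\sigma=g_{ji}^{\,p}\in U_p$, a product of exactly $p$ disjoint infinite cycles supported on rays $i$ and $j$. Conjugation inside $\sym$ preserves cycle type, so $\sigma^\alpha\in\hn$ is again a product of $p$ infinite cycles whose support is almost a union of two rays. Every point in the support of $\sigma$ is pushed into ray $i$ by all large positive powers of $\sigma$ and into ray $j$ by all large negative powers, and an element of $\hn$ cannot exhibit both behaviours on a single ray; hence $\alpha$ carries ray $i$ (minus a finite set) bijectively onto some ray $k$, matching up congruence classes modulo $p$, the sign analysis forces $t_k(\sigma^\alpha)>0$, and the orbit count forces $t_k(\sigma^\alpha)=p$ exactly. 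This shows directly that $\sigma^\alpha\in U_p$ and, by symmetry with $\alpha^{-1}$, that $U_p^\alpha=U_p$; the same ray-permutation analysis then feeds the split surjection onto $\mathcal{S}_p\wr\mathcal{S}_n$. You should replace your ``intersect and upgrade'' plan with this argument, which is the analogue for the powers $g_{ji}^{\,p}$ of the proof of Theorem~\ref{thm_aut}.
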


\begin{proof}
Let $\phi\in\com(\hn)$. By Lemma~\ref{lem_finite_index}, we can assume
that $U_p$ is contained in the domain of both $\phi$ and $\phi^{-1}$
for some even $p$. Let $V$ be the image of $U_p$ under $\phi$. Then $V$
has finite index in \hn\ and so contains \alt, by
Lemma~\ref{lem_finite_index}.  However, the set of elements of finite
order in $V$ equals $[V,V]$, whence $[V,V]=\alt$, as \alt\ and
\fin\ are not isomorphic. This means that the restriction of $\phi$ to
\alt\ is an automorphism of \alt, and hence yields a homomorphism

\begin{equation*}
\iota\colon\com(\hn) \longrightarrow \sym.
\end{equation*}
%
That $\iota$ is an injective homomorphism to $\com_\sym(\hn)$ follows from a similar argument to the one
in Proposition~\ref{prop_injective} applied to $g_i\,^p$ and six
points of the form $a_\ell = (i, k+p\ell)$ with $\ell\in\{0,1,\ldots ,
5\}$. Since the centralizer in \sym\ of \alt, and hence of any finite
index subgroup of \hn, is trivial, the natural homomorphism from
$\com_\sym(\hn)$ to $\com(\hn)$ mentioned above is also injective, and
we conclude that $\com(\hn)$ is isomorphic to $\com_\sym(\hn)$, and that $\iota$ is the isomorphism:
\begin{equation*}
\iota\colon\com(\hn) \longrightarrow \com_\sym(\hn).
\end{equation*}

%

%
%
%

From now on, we assume that $\phi\in\com_\sym(\hn)$. In particular, the
action of $\phi$ is given by conjugation, and our hypothesis is that
$U_p^\phi\subset\hn$. Now we can apply the argument from the proof of Theorem~\ref{thm_aut} to $\sigma\in U$ and
$\sigma^\phi$ (instead of $\sigma^\alpha$). Namely, consider the $i^{th}$ ray, and choose a $\sigma=g_{ji}^p \in U_p$. So we have
$t_i(\sigma)=p$, $t_j(\sigma)=-p$ and zero translation elsewhere. Now, except for finitely many points, $\sigma^\phi$ preserves the rays and sends $x \phi$ to $x \sigma \phi$. Thus there is an infinite subset of the $i^{th}$ ray which is sent to the same ray by $\phi$, say ray $k$. The infinite subset should be thought of as a union of congruence classes modulo $p$, except for finitely many points. We claim that no infinite subset of ray $j$ can be mapped by $\phi$ to ray $k$. This is because, if it were, the infinite subset would contain a congruence class modulo $p$ (except for finitely many points) from which we would be able to produce two points, $x,y$, in the support of $\sigma^\phi$ such that all sufficiently large positive powers of $\sigma^\phi$ send $x$ into ray $k$ and all sufficiently large negative powers of $\sigma^\phi$ send $y$ into ray $k$, and this is not possible for an element of $\hn$. This means that $\phi$ maps an infinite subset of ray $i$ onto almost all of ray $k$ (observe that ray $k$ is almost contained in the support of $\sigma^\phi$ so must be almost contained in the image of the unions of rays $i$ and $j$). Now applying similar arguments to $\phi^{-1}$ we get that $\phi$ is a bijection between rays $i$ and $k$ except for finitely many points. In fact, $\phi$ must induce bijections between the congruence classes modulo $p$ (except for finitely many points) inside the two ray systems.

Thus $\phi$ induces a permutation of the ray system. Again, looking at large positive powers we deduce that $t_k(\sigma^\phi) > 0 $ and since the support of $\sigma^\phi$ is almost equal to the union of two rays, we must have that $t_k(\sigma^\phi)=p$, as $\sigma$ and hence $\sigma^\phi$ have exactly $p$ orbits. In particular, we may deduce that $\phi$ normalises $U_p$. So $U_p^\phi = U_p$.

In order to proceed, it will be useful to change the ray system. Specifically, each ray can be split into $p$ rays, preserving the order. This realises $U_p$ as a (normal) subgroup of $\hnp$. We say two of these new rays are equivalent if they came from the same old ray. Thus there are $n$ equivalence classes, each having $p$ elements. The group $U_p$ acts on this new ray system as the subgroup of $\hnp$ consisting of all $\sigma \in \hnp $  such that $t_i(\sigma)=t_j(\sigma)$ whenever the $i^{th}$ and $j^{th}$ rays are equivalent. Note that because we have split the rays, these translation amounts can be arbitrary in $U_p$ (as a subgroup of \hnp) and not just multiples of $p$. In particular, we have that $U_p= U_p^\phi\subset\hnp$ and the previous arguments imply that $\phi$ induces a bijection on the new ray system and sends equivalent rays to equivalent rays (since it is actually permuting the old ray system). Since $\phi$ permutes the rays, but must preserve equivalence classes, we get a homomorphism from $N_\sym(U_p)$ to the subgroup of $\mathcal{S}_{np}$ which preserves the equivalence classes - this is easily seen to be $(\mathcal{S}_p\wr\mathcal{S}_n)$ and the above homomorphism is split.

As in Theorem~\ref{thm_aut} we now conclude that the kernel of this homomorphism is $\hnp$ and hence we get that $N_p=N_\sym(U_p)$ is
$\mathcal{H}_{np}\rtimes (\mathcal{S}_p\wr\mathcal{S}_n)$.
\end{proof}

We note that $\com(\hn)$ is not finitely generated, for if it were, it
would lie in some maximal $N_p$.

\section{Metric estimates for \hn}
In this section we will give sharp estimates for the word length of
elements of Houghton's groups. This makes no sense for $\h_1$ which is
not finitely generated. As mentioned in the introduction, the metric
in $\h_2$ was described by Lehnert \cite{jorgdead}. In order to deal
with \hn\ for $n\ge 3$, we introduce the following measure of
complexity of an element.

Given $\sigma\in\hn$, we define $p_i(\sigma)$, for $i\in \Z_n$, to be
the largest integer such that $(i,p_i(\sigma))\sigma\neq (i,p_i(\sigma)
+t_i(\sigma))$.  Note that if $t_i(\sigma) < 0$, then necessarily
$p_i(\sigma)\ge |t_i(\sigma)|$, as the first element in each ray is
numbered $1$.

The \emph{complexity} of $\sigma\in\hn$ is the natural number
$P(\sigma)$, defined by
$$
P(\sigma) = \sum_{i\in\Z_n} p_i(\sigma).
$$
And the \emph{translation amount} of $\sigma$ is
$$
T(\sigma) = \frac12\sum_{i\in\Z_n} |t_i(\sigma)|.
$$

The above remark combined with (\ref{eq_zero_sum}) immediately implies
$P(\sigma)\ge T(\sigma)$. It is easy to see that an element with
complexity zero is trivial, and only the generators $g_{ij}$ have
complexity one.

\begin{thm}
Let $n\ge 3$ and $\sigma\in\hn$, with complexity $P=P(\sigma)\ge 2$. Then
the word length $|\sigma|$ of $\sigma$ with respect to any finite
generating set satisfies $$P/C \leq |\sigma| \leq KP\log P,$$ where
the constants $C$ and $K$ only depend on the choice of generating set.
\end{thm}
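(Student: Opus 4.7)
The two inequalities are of quite different flavours and I would treat them separately. For the lower bound $|\sigma|\geq P/C$, I plan to establish that the complexity function is Lipschitz with respect to a chosen finite generating set $S$: there is a constant $C$, depending only on $S$, such that $|P(\sigma g)-P(\sigma)|\leq C$ for every $\sigma\in\hn$ and every $g\in S\cup S^{-1}$. The reason is that each generator translates each ray by a bounded amount and has non-translational support in a bounded neighbourhood of the roots, so right-multiplication by a single generator can shift each $p_i(\sigma)$ by only a constant. Since $P(1)=0$, induction on word length gives $P(\sigma)\leq C|\sigma|$, i.e.\ $|\sigma|\geq P/C$.

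For the upper bound, I would factor $\sigma=\tau\phi$, where $\tau$ carries the translation data and $\phi\in\fin$ is the residual finitary permutation, and bound each factor separately. The translation part is straightforward: take, for example, $\tau=\prod_{i=1}^{n-1} g_{0\,i}^{\,t_i(\sigma)}$, so that $t_i(\tau)=t_i(\sigma)$ for every $i$ (the case $i=0$ follows from the zero-sum relation \eqref{eq_zero_sum}), with word length at most $\sum_i|t_i(\sigma)|\leq 2T(\sigma)\leq 2P$. Then $\phi=\tau^{-1}\sigma$ lies in $\fin$, and a direct check shows that its support is contained in the first $O(P)$ positions of all $n$ rays combined, so $\phi$ is a permutation of a set of size $N=O(P)$.

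It then suffices to show that any finitary permutation supported in the first $O(N)$ positions of the rays has word length $O(N\log N)$, and this I would prove by a divide-and-conquer construction that exploits the presence of at least one auxiliary ray (using $n\geq 3$). The elementary building blocks are adjacent transpositions, realized as commutators of the form $[g_i,g_j]$, together with block transfers of the form $g_{ij}^k$, each of which relocates the first $k$ positions of ray $i$ onto ray $j$ (in reversed order) at a cost of $k$ generators. Using these, one splits the supported block into two halves, transfers one half onto a scratch ray at linear cost, recursively realizes the induced permutation on each half, and finally reassembles the two halves, again at linear cost. The resulting recurrence $T(N)=2T(N/2)+O(N)$ solves to $T(N)=O(N\log N)$, and combined with the $O(P)$ bound on $|\tau|$ this yields $|\sigma|\leq KP\log P$.

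The main obstacle is verifying the divide-and-conquer step: one must actually exhibit every finitary permutation on $N$ points as a product of two ``half-size'' permutations separated by an $O(N)$-cost rearrangement move. The built-in reversal of the $g_{ij}^k$ and the noncommutativity between translations and transpositions both demand careful bookkeeping, and one must formulate the induction hypothesis strongly enough to handle the permutations induced on each half. A clean route is probably to first gather the whole support onto a single ray (at cost $O(N)$) and then phrase the recursion as a permutation of a contiguous interval on that ray, which makes the cost of each merging step transparent.
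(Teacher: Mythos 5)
Your proposal is correct and follows essentially the same strategy as the paper: the lower bound comes from showing that right multiplication by a generator changes the complexity $P$ by a bounded amount (the paper proves the sharp bound $|P(\sigma g_{ij})-P(\sigma)|\le 1$), and the upper bound comes from first removing the translation part at cost $O(P)$, observing that the residual finitary permutation is supported on the first $O(P)$ positions, and then realizing it by a merge-sort-style divide-and-conquer using spare rays as scratch space, giving the recurrence $T(N)=2T(N/2)+O(N)$. The only cosmetic differences are that the paper multiplies on the right to kill the translations (checking via its formula for $p_k(\sigma g_{ij})$ that this never increases $P$) and sorts an initial interval on each ray separately rather than gathering the support onto a single ray.
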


\begin{proof}
Since the word length with respect to two different finite generating
sets differs only by a multiplicative constant, we can and will choose
$\{\,g_{ij}\mid i,j\in\Z_n,\,i\neq j\}$ as generating set to work
with, and show that the statement holds with $C=1$ and $K=7$.

The lower bound is established by examining how multiplication by a
generator can change the complexity.  Suppose $\sigma$ has complexity
$P$ and consider $\sigma g_{ij}$.  It is not difficult to see that

\begin{equation}\label{eq_p_k}
p_k(\sigma g_{ij}) = \left\{\begin{array}{cl}
p_k(\sigma) + 1, & \text{ if } k=i \text{ and } (i,p_i(\sigma)+1)\sigma = (i,1)\\
p_k(\sigma) - 1, & \text{ if } k=j,\ (j,p_j(\sigma)+1)\sigma = (j,1) \text{ and }\\
 & (j,p_j(\sigma))\sigma = (i,1)\\
p_k(\sigma), & \text{ otherwise}
\end{array}
\right.
\end{equation}
where the first two cases are mutually exclusive, as $i\neq j$.  Thus
$|P(\sigma g_{ij})- P(\sigma)|\le 1$, which establishes the lower
bound.

The upper bound is obtained as follows. Suppose $\sigma\in\hn$ has
complexity $P$. First we show by induction on $T=T(\sigma)$ that there
is a word $\rho$ of length at most $T\le P$ such that the complexity
of $\sigma\rho$ is $\bar{P}$ with $\bar{P}\leq P$ and $T(\sigma\rho) =
0$. The case $T=0$ is trivial. If $T > 0$, then there are $i,j\in\Z_n$
with $t_i(\sigma) > 0$ and $t_j(\sigma) < 0$. So $T(\sigma g_{ij}) =
T-1$.  Moreover, $P(\sigma g_{ij})\le P$, because the first case of
(\ref{eq_p_k}) is excluded, as it implies that $t_i(\sigma) =
-p_i(\sigma)\leq 0$, contrary to our assumption. This completes the
induction step.

We are now in the situation that $\sigma\rho\in\fin$ and loosely
speaking we proceed as follows.
\begin{enumerate}
\item We push all irregularities into ray $0$, i.e. multiply by $\prod
  g_{i0}\,^{p_i(\sigma\rho)}$.
\item We push all points back into the ray to which they belong,
  except for those from ray $0$ which we mix into ray $1$, say.
\item We push out of ray $1$ separating the points belonging to rays
  $0$ and $1$ into ray $0$ and any other ray, say ray $2$,
  respectively.
\item We push the points belonging to ray $1$ back from ray $2$ into it.
\end{enumerate}

These four steps can be achieved by multiplying by an element $\mu$
of length at most $4\bar{P}$, such that $\sigma\rho\mu$ is an element
which, for each $i$, permutes an initial segment $I_i$ of ray $i$.
Notice that $\sigma \rho \mu$ is now an element of $\hn$ which maps each ray to itself, and hence
$t_i(\sigma \rho \mu)=0$ for all $i$.

It is clear that $\mu$ can be chosen so that the total length of the moved intervals, $\sum |I_i| \le
\bar{P}$. Finally, we sort each of these intervals using a recursive
procedure, modeled on standard merge sort.

In order to sort the interval $I=I_2$ say, we push each of its points
out of ray $2$ and into either ray $0$ if it belongs to the lower
half, or to ray $1$ if it belongs to the upper half of $I$. If each of
the two halves occurs in the correct order, then we only have to push
them back into ray $2$ and are done, having used $2|I|$ generators.
If the two halves are not yet sorted, then we use the same ``separate
the upper and lower halves'' approach on each of them recursively in
order to sort them. In total this takes at most $2|I|\log_2 |I|$
steps.

Altogether we have used at most
\[
P + 4\bar{P} + 2\sum_{i\in \Z_n} |I_i|\log_2 |I_i| \le 7P\log_2P
\]
generators to represent the inverse of $\sigma$; we used the
hypothesis $P\ge 2$ in the last inequality.
\end{proof}

We note that because there are many permutations, the fraction of
elements which are close to the lower bound goes to zero in much the
same way as shown for Thompson's group $V$ by Birget \cite{birget} and
its generalization $nV$ by Burillo and Cleary \cite{2vm}.

\begin{lemma}
Let $n\geq 3$. For $\hn$ take the generating set $g_1,\ldots, g_{n-1}$ with $n-1$ elements. Consider the following sets:
\begin{itemize}
\item $B_k$ is the ball of radius $k$,
\item $C_k$ is the set of elements in $\hn$ which have complexity $P=k$,
\item $D_k\subset C_k$ is the set of elements of $C_k$ which have word length at most $k\log_{2n-2}k$
\end{itemize}
Then, we have that
$$
\lim_{k\to\infty}\frac{|D_k|}{|C_k|}=0
$$
\end{lemma}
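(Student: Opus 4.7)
The plan is to bound $|D_k|$ from above by the volume of a ball in the Cayley graph, bound $|C_k|$ from below by explicitly constructing many permutations of complexity exactly $k$, and then compare the two estimates using Stirling's formula.

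First I would handle $|D_k|$ by observing that $D_k\subseteq B_{k\log_{2n-2}k}$. Since \hn\ is generated by the $n-1$ elements $g_1,\dots,g_{n-1}$, it is a quotient of the free group of rank $n-1$, so every element of $B_r$ is represented by a freely reduced word of length at most $r$ in these generators. A standard count shows that the number of such reduced words is at most $C(2n-3)^r$ for some constant $C=C(n)$, since after fixing one of the $2(n-1)$ possible initial letters each subsequent letter has only $2n-3$ legal choices. Using the change-of-base identity
\[
(2n-3)^{\log_{2n-2}k}=k^{c}\quad\text{with}\quad c:=\log_{2n-2}(2n-3)<1,
\]
this gives $|D_k|\le C\,k^{ck}$.

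For the lower bound I would exhibit an injective family in $C_k$ by taking those $\sigma\in\hn$ which fix $R_n\setminus\{(0,1),\dots,(0,k)\}$ pointwise, permute this initial segment of ray $0$ in an arbitrary manner, and in addition send $(0,k)$ to some point other than itself. Any such $\sigma$ has $t_i(\sigma)=0$ for every $i$, $p_0(\sigma)=k$, and $p_i(\sigma)=0$ for $i\ne 0$, and hence $P(\sigma)=k$. The number of permutations of a $k$-element set which do not fix a prescribed point equals $k!-(k-1)!=(k-1)(k-1)!\ge k!/2$ for $k\ge 2$, so $|C_k|\ge k!/2$. Combining this with the previous estimate and Stirling's inequality $k!\ge(k/e)^k$ yields
\[
\frac{|D_k|}{|C_k|}\;\le\;\frac{2C\,k^{ck}}{k!}\;\le\;2C\left(\frac{e}{k^{1-c}}\right)^{\!k},
\]
and the right-hand side tends to zero as $k\to\infty$ because $c<1$ forces the base to tend to zero.

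The main obstacle is obtaining the exponent $c<1$ in the first step: the trivial bound $|B_r|\le(2n-2)^r$ gives only $|D_k|\le k^k$, which cannot be compared favourably with $k!\sim(k/e)^k$. The reduced-word count over a generating set of size $n-1$ is precisely what provides the strict inequality $c<1$, and hence the subexponential gap necessary to dominate $k!$.
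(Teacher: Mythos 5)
Your proof is correct, and it follows the same overall template as the paper's (enclose $D_k$ in the ball of radius $k\log_{2n-2}k$, bound that ball above, bound $|C_k|$ below by a factorial, and finish with Stirling), but the quantitative mechanism that produces the gap is genuinely different. The paper uses the crude ball estimate $|B_r|\le(2n-2)^r$, which gives exactly $k^k$, and then wins by taking a much larger family in $C_k$: roughly $(nk-2)!$ elements obtained by permuting the first $k$ points of \emph{all} $n$ rays (pinned by a transposition reaching depth $k$), so that $(nk-2)!/k^k\to\infty$ by Stirling. You instead take the more modest family of about $k!$ permutations supported on a single ray --- which, incidentally, is a cleaner witness that the complexity is \emph{exactly} $k$, since your elements have $p_0=k$ and $p_i=0$ for $i\ne 0$, whereas the paper's elements a priori only have complexity between $k$ and $nk$ --- and you recover the needed gap by counting freely reduced words, replacing $(2n-2)^r$ by $C(2n-3)^r$ and hence $k^k$ by $k^{ck}$ with $c=\log_{2n-2}(2n-3)<1$; this is valid for $n\ge 3$ since $2n-3\ge 3$. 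Both routes are sound; the paper's avoids any appeal to reduced words but leans on the multi-ray count, while yours isolates exactly where the subexponential gap must come from when only $k!$ elements are available, and your closing remark correctly identifies why the naive $k^k$ bound would not suffice in that setting.
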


An element of complexity $P$, according to the metric estimates proved above, has word length between $P$ and $P\log P$.  What this lemma means is that most elements with complexity $P$ will have word length closer to $P\log P$ than to $P$.

\begin{proof}
Observe that
$$
\frac{|D_k|}{|B_{k\log_{2n-2}k}|}\leq 1
$$
because it is a subset. Now, introduce the $C_k$ as
$$
\frac{|D_k|}{|C_k|}\frac{|C_k|}{|B_{k\log_{2n-2}k}|}
$$
and the proof will be complete if we show that
$$
\lim_{k\to\infty}\frac{|C_k|}{|B_{k\log_{2n-2}k}|}=\infty.
$$
In $C_k$ there are at least $(nk-2)!$ elements. This is because we can take a transposition involving a point at distance $k$ down one of the rays, with another point. Since this already ensures $P=k$, we are free to choose any permutation of the other $nk-2$ points which are at one of the first $k$ positions in each ray. And inside $|B_{k\log_{2n-2}k}|$, counting grossly according to the number of generators, there are at most
$(2n-2)^{k\log_{2n-2}k}=k^k$ elements. Now the limit becomes
$$
\lim_{k\to\infty}\frac{(nk-2)!}{k^k}
$$
which is easily seen that it approaches infinity using Stirling's formula and the fact that $n\geq 3$.
\end{proof}

Consequentially, these estimates give an easy way to see that the group has exponential growth.   We note that exponential growth also follows easily from the fact that $g_{01}$ and $g_{02}$ generate a free subsemigroup.

\begin{prop}
  Let $n\geq3$. Then $\hn$ has exponential growth.
\end{prop}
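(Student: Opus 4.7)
The plan is to verify the hint given just before the proposition: namely, that $g_{01}$ and $g_{02}$ (both of which exist because $n\geq 3$) generate a free sub-semigroup of $\hn$, from which exponential growth follows immediately. Concretely, I would first show that two distinct positive words in these two elements always represent distinct permutations, and then use this to count $2^k$ group elements inside any ball of radius $O(k)$.

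For the freeness, fix an arbitrary positive word $w = g_{0 a_1} g_{0 a_2} \cdots g_{0 a_k}$ with $a_i \in \{1, 2\}$ and trace its right action on the initial segment $(0, 1), \ldots, (0, k+1)$ of ray $0$. The key structural facts are that both generators act identically on ray $0$ (sending $(0, m) \mapsto (0, m - 1)$ for $m > 1$) and fix every ray with index at least $3$, while $g_{0 a}$ diverts $(0, 1)$ into ray $a$ and then translates only ray $a$ upward. A straightforward bookkeeping calculation then yields
\[
(0, j) \cdot w = \bigl(a_j,\ 1 + \#\{s : j < s \leq k,\ a_s = a_j\}\bigr) \text{ for } 1 \leq j \leq k,
\]
whereas $(0, k+1) \cdot w = (0, 1)$. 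From these images the word $w$ is completely reconstructible: the length $k$ is detected as one less than the smallest $j$ for which $(0, j) \cdot w$ lies in ray $0$, and each letter $a_j$ is simply the first coordinate of $(0, j) \cdot w$. Hence distinct positive words yield distinct elements of $\hn$.

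For the growth conclusion, fix any finite generating set $S$ and let $L$ bound the $S$-lengths of $g_{01}$ and $g_{02}$. Each of the $2^k$ distinct positive words of length $k$ in $\{g_{01}, g_{02}\}$ then represents an element lying in the ball $B_{Lk}$, so $|B_{Lk}| \geq 2^k$ and $\hn$ has exponential growth. The only mildly delicate point in the whole argument is keeping the right-action convention straight throughout the trajectory computation; everything else is routine.
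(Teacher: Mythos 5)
Your proof is correct, but it takes a genuinely different route from the paper's. The paper deduces exponential growth from its own metric estimates: there are at least $P!$ finitary permutations of complexity $P$, each of word length at most $KP\log P$ by the preceding theorem, so $\log|B_{KP\log P}|/(KP\log P)\ge \log P!/(KP\log P)\to 1/K>0$. You instead carry out in full the alternative the authors only mention in passing, namely that $g_{01}$ and $g_{02}$ generate a free subsemigroup. Your trajectory computation is right: under the right action, $(0,j)$ descends along ray $0$ until the $j$-th letter diverts it to $(a_j,1)$, after which only later letters equal to $a_j$ push it up, giving $(0,j)w=(a_j,\,1+\#\{s: j<s\le k,\ a_s=a_j\})$ and $(0,k+1)w=(0,1)$; this recovers both $k$ and the letter sequence, so distinct positive words give distinct elements and $|B_{Lk}|\ge 2^k$. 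What your approach buys is self-containedness and elementarity --- it needs nothing but the definition of the generators, whereas the paper's argument depends on the nontrivial upper bound $|\sigma|\le KP\log P$. What the paper's approach buys is an explicit growth-rate constant tied to the metric estimates ($1/K$) and continuity with the surrounding discussion of how word length compares to complexity; it also quantifies growth via the abundance of finitary permutations rather than via a two-generator subsemigroup. Both arguments are complete and valid.
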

\begin{proof}
  Consider a finitary permutation of complexity $P$, and observe that there are at least $P!$ of those. By the metric estimate, its word length is at most $KP\log P$. Using again as in the previous lemma the notation $B_k$ for a ball, we will have that the group has exponential growth if
  $$
  \lim_{k\to\infty}\frac{\log|B_k|}k>0.
  $$
  In our case, this amounts to
  $$
  \lim_{P\to\infty}\frac{\log|B_{KP\log P}|}{KP\log P}\geq\lim_{P\to\infty}\frac{\log P!}{KP\log P}=\frac 1K.
  $$
\end{proof}

\section{Subgroup embeddings}

We note that each $\hn$ is a subgroup of $\H_m$ for $n < m$ and that our
estimates together with work of Lehnert are enough to give at least
quadratic distortion for some of these embeddings.

\begin{thm}
The group $\H_2$ is at least quadratically distorted in $\H_m$ for $m
\geq 3$.
\end{thm}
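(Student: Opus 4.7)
The plan is to exhibit a family $\sigma_k\in\H_2\subseteq\H_m$ whose word length in $\H_m$ is at most $3k$ while its word length in $\H_2$ is at least $\binom{k}{2}$; this will give at least quadratic distortion. My candidate is the involution $\sigma_k\in\H_2$ that reverses the initial segment of length $k$ of ray $0$: $(0,i)\sigma_k=(0,k+1-i)$ for $1\le i\le k$, while $\sigma_k$ fixes every other point of $R_2$.

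For the upper bound in $\H_m$ (where $m\ge 3$), I would use ray $2$ as scratch space and verify by tracking each point the explicit identity
\[
\sigma_k=g_{02}^k\,g_{21}^k\,g_{10}^k.
\]
The factor $g_{02}^k$ moves the initial $k$ points of ray $0$ onto the first $k$ points of ray $2$ in reversed order; $g_{21}^k$ then transfers them onto ray $1$ in the original order; and $g_{10}^k$ finally places them onto ray $0$ reversed once more. Simultaneously, each factor shifts initial segments of the rays it touches so that all other points return to their starting positions. This yields $|\sigma_k|_{\H_m}\le 3k$ in the generating set $\{g_{ij}\}$.

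For the lower bound in $\H_2$, I would identify $R_2$ with $\Z$ via $(0,j)\leftrightarrow j$ and $(1,j)\leftrightarrow 1-j$, so that $g_0$ acts as the shift $x\mapsto x-1$ and $\tau$ as the transposition of $0$ and $1$. Any word $w=g_0^{b_0}\tau g_0^{b_1}\tau\cdots\tau g_0^{b_K}$ representing a finitary element satisfies $\sum b_i=0$ and can be rewritten as a product $\tau_{d_1}\tau_{d_2}\cdots\tau_{d_K}$ of $K$ adjacent transpositions, where $\tau_c:=g_0^c\tau g_0^{-c}$ is the swap of $c$ and $c+1$ and $d_i=b_0+\cdots+b_{i-1}$. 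Since the minimum number of adjacent transpositions needed to express a finitary permutation of $\Z$ equals its inversion count, we obtain $|\sigma|_{\H_2}\ge\mathrm{inv}(\sigma)$ for every finitary $\sigma\in\H_2$. Applied to $\sigma_k$, a direct count gives $\mathrm{inv}(\sigma_k)=\binom{k}{2}$; the same bound can also be extracted from Lehnert's explicit description of the word metric on $\H_2$.

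Putting it together, for $n=|\sigma_k|_{\H_m}$ we have $n\le 3k$ and $|\sigma_k|_{\H_2}\ge k(k-1)/2\ge n(n-3)/18$, so the distortion function of $\H_2$ in $\H_m$ grows at least quadratically. The main obstacle is the $\H_2$ lower bound, which is where Lehnert's metric analysis (or the self-contained inversion argument above) is essential; the $\H_m$ upper bound is just the explicit three-factor word, and the final comparison is bookkeeping.
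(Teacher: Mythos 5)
Your proof is correct, but it takes a genuinely different route from the paper's. The paper tests distortion on the element $\sigma_n$ that transposes $(0,j)$ and $(1,j)$ for all $j\le n$, writes it as the four-factor word $g_{02}^{\,n}g_{12}^{\,n}g_{02}^{\,-n}g_{12}^{\,-n}$ in $\H_3$, and then simply cites Lehnert's Theorem~8 for the fact that this family has quadratic word length in $\H_2$. You instead use the reversal of the first $k$ points of ray $0$, whose three-factor expression $g_{02}^{\,k}g_{21}^{\,k}g_{10}^{\,k}$ does check out (the intermediate shifts of rays $0$, $1$, $2$ all cancel), and --- more substantially --- you replace the appeal to Lehnert by a self-contained lower bound: identifying $R_2$ with $\Z$ so that $g_0$ is the shift and $\tau$ an adjacent transposition, rewriting any word as a product of conjugates $g_0^{c}\tau g_0^{-c}$, and invoking the Coxeter-length/inversion-count formula to get $|\sigma|_{\H_2}\ge \mathrm{inv}(\sigma)$ for finitary $\sigma$, which equals $\binom{k}{2}$ for your reversal. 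What your approach buys is independence from Lehnert's paper and a clean, reusable lower bound for all finitary elements of $\H_2$; what the paper's approach buys is brevity, since the only computation it needs is the single identity in $\H_3$. Both arguments are complete and yield the same conclusion.
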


\begin{proof}
We consider the element $\sigma_n$ of $\h_2$ which has $T(\sigma_n)=0$
and transposes $(0,k)$ and $(1,k)$ for all $k \leq n$. Then $\sigma_n$
corresponds to the word $g_n$ defined in Theorem 8 of \cite{jorgdead},
where it is shown to have length of the order of $n^2$ with respect to
the generators of $\h_2$ in Lemma 10 there, which are exactly the
generators for $\h_2$ given in the introduction. One easily checks
that $\sigma_n = g_{02}\,^n g_{1 2}\,^n g_{0 2}\,^{-n} g_{12}\,^{-n}$
in $\h_3$. Thus a family of words of quadratically growing length in
$\h_2$ has linearly growing length in $\h_3$, which proves the theorem.
\end{proof}

A natural, but seemingly difficult, question is whether $\hn$ is
distorted in $\h_m$ for $3\le n < m$.  Another question, which also seems difficult, is to ask whether $\hn$ is distorted in Thompson's group $V$, under the embeddings
 mentioned in the introduction, \cite{claasv}.

\section{Some quasi-isometries of \hn}

Commensurations give rise to quasi-isometries and are often a rich
source of examples of quasi-isometries.  Here we show that the natural
map from the commensurator of \hn\ to the quasi-isometry group of \hn,
which we denote by $\qi(\hn)$, is an injection.  That is, we show that
each commensuration is not within a bounded distance of the identity. That this is an injection also follows from the more general argument of Whyte which appears as Proposition 7.5 in Farb-Mosher \cite{farbmosher}.

\begin{thm}
The natural homomorphism from $\com(\hn)$ to $\qi(\hn)$ is an embedding for $n \geq 2$.
\end{thm}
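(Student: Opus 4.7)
The plan is to use the identification $\com(\hn)\cong\com_\sym(\hn)$ established in the proof of Theorem~\ref{thm_com} and prove the contrapositive: any $\phi\in\com_\sym(\hn)$ whose induced quasi-isometry of \hn\ lies at bounded distance from the identity must equal $1$. A representative $\phi\in\sym$ normalises $U_p$ for some even $p$, and the induced quasi-isometry is, up to bounded distance, the conjugation $\sigma\mapsto\phi^{-1}\sigma\phi$ on the finite-index subgroup $U_p$. The hypothesis therefore translates into a uniform estimate: there is a constant $C$ with $\lvert[\sigma,\phi]\rvert_\hn\le C$ for every $\sigma\in U_p$.

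The key step exploits the local finiteness of the word metric. Since \hn\ is finitely generated for $n\ge 2$, the ball of radius $C$ in \hn\ is finite, so the commutator set $\{[\sigma,\phi]\mid\sigma\in U_p\}$ is finite. An easy computation shows that $[\sigma,\phi]=[\sigma',\phi]$ if and only if $\sigma(\sigma')^{-1}\in C_\sym(\phi)$, so the commutator map factors through the coset space $U_p/C_{U_p}(\phi)$. Consequently $C_{U_p}(\phi)$ has finite index in $U_p$, and hence in \hn.

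The argument then concludes with a centraliser calculation inside \alt. Because $\alt\subset U_p$, the subgroup $C_\alt(\phi)=\alt\cap C_{U_p}(\phi)$ has finite index in \alt. But \alt\ is infinite and simple by Lemma~\ref{lem_free_facts}, and therefore has no proper subgroup of finite index: the action of \alt\ on the cosets of any such subgroup would yield a homomorphism to a finite symmetric group whose kernel, being normal in \alt, is either trivial (impossible as \alt\ is infinite) or all of \alt\ (forcing the subgroup to be \alt). Hence $\alt\subset C_\sym(\phi)$, placing $\phi$ in $C_\sym(\alt)=\{1\}$; the triviality of this centraliser was already used in the proofs of Proposition~\ref{prop_injective} and Theorem~\ref{thm_com}. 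Thus $\phi=1$, as required.

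The step I expect will need most care is the preliminary reduction, namely verifying that the quasi-isometry associated with the commensuration class of $\phi$ is well defined up to bounded distance and that honest conjugation on $U_p$ is a legitimate representative of that class. Once that reduction is in hand the argument is purely algebraic and, pleasingly, uses neither the metric estimates of the previous section nor the fine structure of $N_p$ from Theorem~\ref{thm_com} beyond the identification of $\com(\hn)$ with $\com_\sym(\hn)$.
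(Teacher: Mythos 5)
Your proof is correct, but it takes a genuinely different route from the paper's. The paper argues directly and constructively: viewing a non-trivial $\phi$ as an element of $N_p\subset\sym$, it splits into three cases according to whether $\phi$ eventually translates some ray, merely permutes rays, or is a non-trivial finitary permutation, and in each case exhibits, for every $N$, an explicit $\sigma$ (a transposition far down a ray, or a conjugate pushing the support of $\phi$ far out) with $d(\sigma,\sigma^\phi)\ge N$; the last step leans on the lower bound $P/C\le|\sigma|$ from the metric section. You instead argue by contrapositive with a soft algebraic argument: bounded displacement on the quasi-dense subgroup $U_p$ gives a uniform bound on $\lvert[\sigma,\phi]\rvert$, finiteness of balls makes the commutator set finite, the standard observation that $\sigma\mapsto[\sigma,\phi]$ separates cosets of the centralizer forces $C_{U_p}(\phi)$ to have finite index, and then simplicity of the infinite group \alt\ (no proper finite-index subgroups) together with the triviality of the centralizer of \alt\ in \sym\ kills $\phi$. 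Your version needs no case analysis, no metric estimates beyond finite generation, and is essentially the specialization to \hn\ of Whyte's general criterion (Farb--Mosher, Proposition 7.5), which the paper explicitly acknowledges as an alternative proof; what it gives up is the quantitative information in the paper's argument, namely that the displacement $d(\sigma,\sigma^\phi)$ can be made to grow linearly in how far down a ray one works. The one point you flag as delicate --- that bounded distance from the identity in $\qi(\hn)$ yields the uniform bound $\lvert\sigma^{-1}\sigma^\phi\rvert\le C$ on $U_p$ --- is indeed fine by left-invariance of the word metric, since conjugation by $\phi$ on the finite-index subgroup $U_p$ is a legitimate representative of the quasi-isometry class of the commensuration.
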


\begin{proof}
We will show that for each non-trivial $\phi\in\com(\hn)$ and every
$N\in\N$ we can find a $\sigma \in \hn$ such that $d(\sigma,\sigma^\phi)
\ge N$, so none of the non-trivial images are within a bounded distance of the identity.
By Theorem~\ref{thm_com}, we can and will view $\phi$ as a
non-trivial element of $N_p\subset\sym$ for some even $p$.

If  $\phi$ eventually translates a ray $i$ non-trivially to a possibly different ray $j$, then we let
$\sigma=((i,N)\,(i,N+1))$, a transposition in the translated ray.  The
image of $\sigma$ under conjugation by $\phi$ is the transposition
$((j,N+t),(j,N+t'+1))$, and the distance $d(\sigma,\sigma^\phi)$
is the length of $\sigma^{-1}\sigma^\phi$, which is at least $N$
since it moves at least one point at distance $N$ down one of the rays.

If $\phi$ does not eventually translate a ray but
eventually non-trivially permutes ray $i$ with another ray $j$, then we can show boundedness away from the identity by
 taking  $\sigma=((j,N)\,(j,N+1))$.  The point $(i,N)$ is fixed by $\sigma$ but
 is moved to $(i, N+1)$ under $\sigma^\phi$ ensuring that the length of  $\sigma^{-1}\sigma^\phi$ is at least $N$.

Finally, if $\phi$ does not have the preceding two properties, then
$\phi$ is a non-trivial finitary permutation. Since Houghton's group is $k$-transitive, for every $k$, we can find a $\sigma \in \hn$ such that
$\phi^\sigma$ has support disjoint from that of $\phi$, and at distance at least $N$ down one of the rays.
Hence $\sigma^{-1} \phi^{-1} \sigma \phi = \sigma^{-1} \sigma^\phi$ has length at least $N$.

\end{proof}

\section{Co-Hopficity}

Houghton's groups are long known to be Hopfian
although they are not residually finite
, see \cite{isolated}.
In this section we will prove
that \hn\ is not co-Hopfian, by exhibiting a map which is injective
but not surjective. The map is the following:
$$
\begin{array}c
f\colon\hn\longrightarrow\hn\\
s\mapsto f(s)
\end{array}
$$
defined by: if $(i,n)s=(j,m)$, then:
$$
(i,2n-1)f(s)=(j,2m-1)\qquad\text{and}\qquad (i,2n)f(s)=(j,2m).
$$

It is straightforward to show that $f$ is a homomorphism.  It is injective, because if $s$ is not the identity with $(i,n)s\ne(i,n)$, then $(i,2n)f(s)\ne(i,2n)$. And clearly the map is
not surjective, because the permutation always sends adjacent points
$(i,2n-1)$, $(i,2n)$ to adjacent points, and a permutation which does
not do this cannot be in the image.

\medskip

In fact, $\hn$ has many proper subgroups isomorphic to the whole group. The following argument was pointed out to us by Peter Kropholler.

One can well-order the ray system by taking a lexicographic order. The group $\hn$ is then the group of all almost order preserving bijections of the well-ordered ray system. It is then clear that the ray system minus a point is order isomorphic to the original ray system, which demonstrates that a point stabiliser is a subgroup isomorphic to $\hn$.

\begin{thm}
Houghton's groups \hn\ are not co-Hopfian.
\end{thm}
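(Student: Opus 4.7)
The plan is to verify that the explicit ``doubling'' map $f\colon\hn\to\hn$ defined just above the theorem statement is a well-defined injective group homomorphism whose image is a proper subgroup; this witnesses the failure of co-Hopficity. First, I would check that $f(s)$ genuinely lies in \hn\ for every $s\in\hn$. Concretely, if $s$ is eventually translation by $t_i(s)$ on ray $i$ (for $k$ large enough), then from the defining formulae $(i,2k-1)f(s)=(i,2k-1+2t_i(s))$ and $(i,2k)f(s)=(i,2k+2t_i(s))$, so $f(s)$ is eventually translation by $2t_i(s)$ on each ray. The eventual translations still sum to zero, and $f(s)$ is a bijection of $R_n$ because $s$ is.

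Next, I would show that $f$ is a homomorphism by a short unpacking. If $(i,n)s=(j,m)$ and $(j,m)t=(k,\ell)$, then $(i,n)(st)=(k,\ell)$; applying the definition to the odd and even doubled vertices separately gives $(i,2n-1)f(st)=(k,2\ell-1)=(i,2n-1)f(s)f(t)$ and likewise for $(i,2n)$. Injectivity is immediate: if $s\neq1$, pick $(i,n)$ with $(i,n)s\neq(i,n)$; then $(i,2n)f(s)\neq(i,2n)$, so $f(s)\neq1$.

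For non-surjectivity, I would observe the structural obstruction already mentioned in the text: every element of the image of $f$ preserves the partition of $R_n$ into the adjacent pairs $\{(i,2n-1),(i,2n)\}$, sending each such pair bijectively onto another such pair. Hence any element of \hn\ that splits some such pair, for instance the transposition $\bigl((0,1)\,(0,2)\bigr)$, lies in $\hn\setminus f(\hn)$.

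There is no substantive obstacle in this proof; the only things requiring care are confirming membership $f(s)\in\hn$ (which reduces to the translation amounts doubling) and the small combinatorial check that the image preserves the adjacent-pair partition. If one prefers a more conceptual approach, the alternative plan is to follow the argument attributed to Kropholler: equip $R_n$ with the lexicographic well-ordering, identify \hn\ with the group of almost order-preserving bijections, and note that removal of a single point leaves an order-isomorphic copy of $R_n$, so the stabilizer of any point is a proper subgroup of \hn\ isomorphic to \hn\ itself.
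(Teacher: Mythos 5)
Your proposal is correct and follows the paper's own argument essentially verbatim: verify that the doubling map $f$ is a well-defined injective endomorphism and that its image is proper because image elements carry each adjacent pair $\{(i,2m-1),(i,2m)\}$ onto another such pair (with the Kropholler point-stabiliser argument as the same alternative the paper records). One small imprecision: your witness $\bigl((0,1)\,(0,2)\bigr)$ does not actually \emph{split} the pair $\{(0,1),(0,2)\}$ but preserves it setwise, so you should either replace it by, say, $\bigl((0,2)\,(0,3)\bigr)$, or note that an image element preserving a pair must fix it pointwise (it sends odd-indexed points to odd-indexed points), which still excludes your transposition.
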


\end{document}